\documentclass[11pt]{amsart}

\usepackage{latexsym}
\usepackage{amssymb}
\usepackage{amsmath}
\usepackage{xcolor}
\usepackage{tikz-cd}
\usepackage{mathrsfs}

\usepackage[italian, english]{babel}
\usepackage{url}

\newtheorem{theorem}{Theorem}[section]
\newtheorem{lemma}[theorem]{Lemma}

\theoremstyle{definition}

\theoremstyle{remark}

\newtheorem{remark}[theorem]{Remark}

\def\N{\mathbb{N}}

\def\G{\mathcal{G}}

\def\U{\mathcal{U}}
\def\V{\mathcal{V}}
\def\W{\mathcal{W}}
\def\Zc{\mathcal{Z}}

\begin{document}

\title[A new ultrafilter proof of Van der Waerden's Theorem]
{A new ultrafilter proof 
\\
of Van der Waerden's Theorem}

\author{Mauro Di Nasso}

\address{Dipartimento di Matematica\\
Universit\`a di Pisa, Italy}

\email{mauro.di.nasso@unipi.it}

\subjclass[2000]
{Primary 05D10; Secondary 54D80.}

\keywords{Partition regularity; Van der Waerden's Theorem; Algebra in the space of ultrafilters}


\begin{abstract}
We present a new short proof of Van der Waerden's Theorem
about the existence of arbitrarily long monochromatic arithmetic progressions.
The proof uses algebra in the compact space of ultrafilters $\beta\N$,
but contrarily to the other existing proofs, neither minimal nor idempotent ultrafilters are involved.
\end{abstract}

\maketitle

\section*{Introduction}

Van der Waerdens' Theorem is one of the fundamental results of combinatorics,
with a large number of applications across mathematics. 
It states that for every finite partition (coloring) $\N=C_1\cup\ldots\cup C_r$
there exist arbitrarily long ``monochromatic" arithmetic progressions
$a, a+d,\ldots,a+(\ell-1)d\in C_i$.
After the original purely combinatorial proof 
based on an ingenious and rather complex double inductive procedure \cite{vdw}, 
several others have been found over the years (for example,
a remarkable proof was found by S. Shelah in \cite{sh}).

An ultrafilter proof was first found in 1989 by 
V. Bergelson, H. Furstenberg, N. Hindman, and Y. Katznelson \cite{bfhk}
using minimal idempotent ultrafilters in the right topological compact semigroup $(\beta\N,\oplus)$; 
in fact, every set that belongs to a minimal ultrafilter
contains arbitrarily long arithmetic progressions, 
as was proven shortly thereafter by V. Bergelson and N. Hindman \cite{bh}.
About fifteen years later, in 2004, S. Koppelberg \cite{ko} found another proof 
of Van der Waerden's theorem via ``retractions," again using minimal idempotent ultrafilters. 
In 2020, the author \cite{dn} found a proof based on a special class of filters, 
called ``translation invariants," which are closely related to minimal ultrafilters. 

In this paper, we present a new and rather brief ultrafilter proof.
We proceed with a simple induction, considering at each step 
suitable ultrafilters on the Cartesian square $\N\times\N$ that 
``witness" the inductive hypothesis, and we use algebra in $(\beta\N,\oplus)$. 
It is worth noting that neither minimal ultrafilters nor idempotent ultrafilters 
are used in the construction.

\section{Preliminaries}

A family of ``patterns" $\G\subseteq\mathcal{P}(\N)$ is called
\emph{partition regular} if for every finite partition (coloring) $\N=C_1\cup\ldots\cup C_r$
there exists a color $C_i$ and a pattern $G\in\G$ such that $G\subseteq C_i$.
In this case, we say that $G$ is \emph{monochromatic} for the partition considered.
It is well known that partition regularity is closely related to ultrafilters.
In fact, partition regularity holds if and only if there exists an ultrafilter that is 
``witness" to it, in the sense that each of its elements includes one of the patterns of the family.

\begin{theorem}
Let $\G\subseteq\mathcal{P}$ be a family of patterns.
The following two properties are equivalent:
\begin{enumerate}
\item
For every finite coloring $\N=C_1\cup\ldots\cup C_r$ there exists a 
pattern $G\in\G$ which is monochromatic, \emph{i.e.} $G\subseteq C_i$ for some $i$.
\item
There exists an ultrafilter $\U$ such that for every $A\in\U$
there exists $G\in\G$ with $G\subseteq A$.
\end{enumerate}
\end{theorem}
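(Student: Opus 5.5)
The plan is to prove the two implications separately. The direction (2)$\Rightarrow$(1) is immediate. The direction (1)$\Rightarrow$(2) uses the standard characterization of families contained in an ultrafilter: a family of sets extends to an ultrafilter exactly when it has the finite intersection property. Here we will work with complements rather than with the patterns themselves.

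For (2)$\Rightarrow$(1), let $\U$ be as in (2) and fix a finite coloring $\N=C_1\cup\ldots\cup C_r$. Since $\N\in\U$ and $\U$ is an ultrafilter, some $C_i$ belongs to $\U$. This is the usual fact that if $A\cup B\in\U$ then $A\in\U$ or $B\in\U$, applied $r-1$ times. Property (2) then gives a pattern $G\in\G$ with $G\subseteq C_i$, which is exactly (1).

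For (1)$\Rightarrow$(2), call a set $B\subseteq\N$ \emph{small} if it contains no pattern of $\G$, and consider the family $\mathcal{F}=\{\N\setminus B : B \text{ small}\}$. I claim $\mathcal{F}$ has the finite intersection property. Suppose instead that $(\N\setminus B_1)\cap\ldots\cap(\N\setminus B_r)=\emptyset$ for some small sets $B_1,\ldots,B_r$. Then $\N=B_1\cup\ldots\cup B_r$. Setting $C_i=B_i\setminus(B_1\cup\ldots\cup B_{i-1})$ produces a genuine partition into at most $r$ colors (empty colors can be padded or dropped). By (1), some $C_i$ contains a pattern $G$, so $G\subseteq B_i$, contradicting the smallness of $B_i$. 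The degenerate case where $\mathcal{F}$ is empty needs no separate treatment: if $\emptyset$ is not small, the argument still goes through, and if $\emptyset$ is small then $\mathcal{F}$ contains $\N$. Hence, by Zorn's lemma in its ultrafilter-lemma form, $\mathcal{F}$ extends to an ultrafilter $\U$.

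Finally we check that $\U$ witnesses (2). Take any $A\in\U$ and suppose $A$ contained no pattern. Then $A$ would be small, so $\N\setminus A\in\mathcal{F}\subseteq\U$. That contradicts $A\in\U$, since $A\cap(\N\setminus A)=\emptyset$. Therefore every $A\in\U$ contains some $G\in\G$.

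The only step with real content is the finite intersection property argument. Its key point is converting a finite cover by small sets into a coloring, which requires disjointifying the cover. Everything else is routine ultrafilter bookkeeping.
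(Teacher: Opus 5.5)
Your proof is correct and is essentially the standard argument behind the result the paper cites (Hindman--Strauss, Theorem 5.7, which reduces to their Theorem 3.11). That argument extends the family of complements of pattern-free sets to an ultrafilter, using the finite intersection property that (1) guarantees, and such an ultrafilter cannot contain a pattern-free set. The paper gives no argument of its own here, though its proof of Lemma~\ref{VdW-ultra} uses the same finite-intersection-and-extension idea.
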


\begin{proof}
See \cite{hs} Theorem 5.7.
\end{proof}

For our proof, we will need a variant of the above characterization that focuses 
on arithmetic progressions and considers ultrafilters on the Cartesian square $\N\times\N$.

\smallskip
Before proceeding, we list below some well-known fundamental notions and facts
regarding ultrafilters that we will use in the following
(a natural reference is the comprehensive monograph \cite{hs}).

\smallskip
\begin{itemize}
\item
If $\U$ is an ultrafilter on a set $I$ and $f:I\to J$,
then the \emph{image ultrafilter} $f(\U)$ on $J$ is defined by
setting $A\in f(\U)\Leftrightarrow f^{-1}(A)\in\U$ for every $A\subseteq J$.
\item
If $\U$ and $\V$ are ultrafilters on the sets $I$ and $J$ respectively,
the \emph{tensor product} $\U\otimes\V$ is the ultrafilter on the
Cartesian product $I\times J$ defined by setting
$X\in\U\otimes\V\Leftrightarrow\{i\in I\mid\{j\in J\mid (i,j)\in X\}\in\V\}\in\U$
for every $X\subseteq I\times J$. 
\item
The \emph{pseudo-sum} $\U\oplus\V$ between ultrafilters $\U, \V$ on $\N$ is defined by
setting $A\in\U\oplus\V\Leftrightarrow\{n\mid A-n\in\V\}\in\U$ for every $A\subseteq\N$,
$A-n=\{m\in\N\mid m+n\in A\}$ is the leftward shift of $A$ by $n$.
\item
Tensor products are associative,
provided one identifies Cartesian products $(I\times J)\times K$ 
with $I\times(J\times K)$, but not commutative.
\item
The pseudo-sum operation is associative but not commutative.
\item
Pseudo-sums $\U_1\oplus\ldots\oplus\U_k=\text{Sum}(\U_1\otimes\ldots\otimes\U_k)$ 
are the image ultrafilters of 
tensor products under the sum function $\text{Sum}(n_1,\ldots,n_k)\mapsto n_1+\ldots+n_k$.
\end{itemize}
 
The following property derives directly from the definitions; 
verification is straightfoward.

\begin{lemma}\label{tensorproducts}
For every $s=1,\ldots,k$, let $\U_s$ be an ultrafilter on $I_s$
and let $f_s:I_s\to J_s$ be a function. Then 
$$f_1(\U_1)\otimes\ldots\otimes f_k(\U_k)=(f_1,\ldots,f_k)(\U_1\otimes\ldots\otimes\U_k)$$
where $(f_1,\ldots,f_k):(n_1,\ldots,n_k)\mapsto(f_1(n_1),\ldots,f_k(n_k))$.
\end{lemma}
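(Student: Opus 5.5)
The plan is to argue by induction on $k$, using only the definitions of image ultrafilter and tensor product. For $k=1$ there is nothing to prove beyond the definition: $f_1(\U_1)$ is the image of $\U_1$ under $f_1$. Since tensor products are associative, once we identify $I_1\times\ldots\times I_k$ with $(I_1\times\ldots\times I_{k-1})\times I_k$, the general case reduces to the binary case $k=2$. Applying the binary case to the pair of ultrafilters $\U_1\otimes\ldots\otimes\U_{k-1}$ and $\U_k$, with functions $(f_1,\ldots,f_{k-1})$ and $f_k$, and then invoking the inductive hypothesis for the first factor, gives the statement for $k$.

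So the core is the binary identity $f(\U)\otimes g(\V)=(f,g)(\U\otimes\V)$, which I would check directly on an arbitrary $X\subseteq J_1\times J_2$. On one side,
$$X\in(f,g)(\U\otimes\V)\iff (f,g)^{-1}(X)\in\U\otimes\V\iff\{i\in I_1\mid\{j\in I_2\mid (f(i),g(j))\in X\}\in\V\}\in\U.$$
On the other side, $X\in f(\U)\otimes g(\V)$ means that the set $B=\{a\in J_1\mid X_a\in g(\V)\}$ belongs to $f(\U)$, where $X_a=\{b\mid(a,b)\in X\}$. Now $X_a\in g(\V)$ holds exactly when $g^{-1}(X_a)=\{j\mid (a,g(j))\in X\}\in\V$. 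Also $B\in f(\U)$ holds exactly when $f^{-1}(B)=\{i\mid X_{f(i)}\in g(\V)\}\in\U$. Substituting $a=f(i)$, this is precisely the set appearing in the first chain, so the two conditions coincide.

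There is no real obstacle here. The only point needing care is the bookkeeping of the identification of iterated Cartesian products. One must check that the map $(f_1,\ldots,f_k)$ corresponds to $((f_1,\ldots,f_{k-1}),f_k)$ under the identification, which is immediate from the definitions. Since both sides are ultrafilters, an equality of membership conditions for every $X$ suffices; no inclusion argument in the other direction is needed.
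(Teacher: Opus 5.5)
Your proof is correct and is the direct verification from the definitions that the paper leaves to the reader. The binary computation, showing $f^{-1}(\{a\mid g^{-1}(X_a)\in\V\})=\{i\mid\{j\mid(f(i),g(j))\in X\}\in\V\}$, is right, and the induction via associativity of $\otimes$ correctly extends it to all $k$.
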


Finally, here is the characterization we will need.

\begin{lemma}\label{VdW-ultra}
For every $\ell\in\N$ the following two properties are equivalent:
\begin{enumerate}
\item
For every finite coloring $\N=C_1\cup\ldots\cup C_r$ there exists a 
a monochromatic $\ell$-term arithmetic progression 
$a, a+d, \ldots, a+(\ell-1)d\in C_i$.
\item
There exists an ultrafilter $\W$ on $\N\times\N$ such that
$T_0(\W)=T_j(\W)$ for all $j=0,1,\ldots,\ell-1$, where $T_j:(n,m)\mapsto n+jm$.
\end{enumerate}
\end{lemma}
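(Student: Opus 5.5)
The plan is to prove the two implications separately. The direction (2)$\Rightarrow$(1) will be a direct unwinding of the definitions. For (1)$\Rightarrow$(2), I will start from the ultrafilter given by Theorem 1.1 and build $\W$ by extending a filter base of ``sets of progressions''. Throughout, $\N$ is the set of positive integers, so the second coordinate $m$ of a pair $(n,m)\in\N\times\N$ is always $\geq 1$. This matters: it guarantees that the common differences we produce are nonzero, so the progressions are genuine.

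For (2)$\Rightarrow$(1), take $\W$ as in (2) and let $\U=T_0(\W)$, so $\U=T_j(\W)$ for every $j<\ell$. Given a finite coloring $\N=C_1\cup\ldots\cup C_r$, some $C_i\in\U$. By the definition of image ultrafilter, $T_j^{-1}(C_i)\in\W$ for each $j=0,\ldots,\ell-1$. The finite intersection $\bigcap_{j<\ell}T_j^{-1}(C_i)$ therefore belongs to $\W$ and is nonempty. Any $(a,d)$ in it satisfies $a+jd\in C_i$ for all $j<\ell$ with $d\geq1$, which is the required monochromatic progression.

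For (1)$\Rightarrow$(2), let $\G$ be the family of all $\ell$-term arithmetic progressions. By Theorem 1.1 there is an ultrafilter $\U$ on $\N$ such that every $A\in\U$ contains an $\ell$-term progression. For $A\subseteq\N$ put
$$X_A=\{(a,d)\in\N\times\N\mid a+jd\in A\text{ for all } j=0,\ldots,\ell-1\}=\bigcap_{j<\ell}T_j^{-1}(A).$$
Then $X_A\cap X_B=X_{A\cap B}$, and $X_A\neq\emptyset$ whenever $A\in\U$. So $\{X_A\mid A\in\U\}$ has the finite intersection property and extends to an ultrafilter $\W$ on $\N\times\N$. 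For $A\in\U$ and any $j<\ell$ we have $T_j^{-1}(A)\supseteq X_A\in\W$, hence $A\in T_j(\W)$. Thus $\U\subseteq T_j(\W)$, and since both are ultrafilters, $T_j(\W)=\U$ for all $j$. In particular $T_0(\W)=T_j(\W)$.

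No step is really an obstacle. The only delicate point is the convention on $\N$ just mentioned: if $0$ were allowed as a second coordinate, a principal ultrafilter at $(a,0)$ would trivially satisfy (2), so that convention has to be in force.
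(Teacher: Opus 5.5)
Your proof is correct. The direction (2)$\Rightarrow$(1) is the same as the paper's, but for (1)$\Rightarrow$(2) you take a different route.

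\textbf{Your route.} You first invoke Theorem 1.1 to get a witness $\U$ for $\ell$-term progressions. You then lift it to $\N\times\N$ through the base $\{X_A\mid A\in\U\}$ with $X_A=\bigcap_{j<\ell}T_j^{-1}(A)$. This base is closed under finite intersections, so the finite intersection property is immediate, and $T_j(\W)=\U$ follows from maximality.

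\textbf{The paper's route.} The paper does not use Theorem 1.1. For every $A\subseteq\N$ it takes the set $X(A)$ of pairs $(a,d)$ whose progression is monochromatic for the $2$-coloring $\{A,A^c\}$. It proves the finite intersection property directly from (1), by coloring $\N$ with the $2^n$ atoms generated by $A_1,\ldots,A_n$. It then gets $T_0(\W)=T_j(\W)$ by contradiction, since $T_0^{-1}(A)\cap T_j^{-1}(A^c)$ is disjoint from $X(A)$.

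\textbf{What each buys.} The paper's argument in effect carries out the proof of Theorem 1.1 directly on $\N\times\N$, so it is self-contained. Yours is more modular and gives a slightly sharper conclusion: every ultrafilter $\U$ whose members all contain $\ell$-term progressions equals $T_0(\W)=\ldots=T_{\ell-1}(\W)$ for a suitable $\W$.

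Your remark that $\N$ must exclude $0$ is also to the point. Otherwise a principal ultrafilter at $(a,0)$ witnesses (2) trivially. The paper uses this convention tacitly.
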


\begin{proof}
$(2)\Rightarrow(1)$. Given a finite coloring $\N=C_1\cup\ldots\cup C_r$, pick the 
color $C=C_i$ that belongs to the ultrafilter $\U:=T_0(\W)=T_1(\W)=\ldots=T_{\ell-1}(\W)$.
Since $T_j^{-1}(C)\in\W$ for every $j$, we can take an
element $(a,d)\in\bigcap_{j=0}^{\ell-1}T_j^{-1}(C)\in\W$.
The $\ell$-term arithmetic progression $a, a+d, \ldots, a+(\ell-1)d$
is included in $C$.

\smallskip
$(1)\Rightarrow(2)$. For $A\subseteq\N$, let 
\begin{multline*}
X(A):=\{(a,d)\in\N\times\N\mid a, a+d, \ldots, a+(\ell-1)d\in A\ \text{or}\ 
\\
a, a+d, \ldots, a+(\ell-1)d\in A^c\}.
\end{multline*}
We claim that the family $\mathcal{G}=\{X_A\mid A\subseteq\N\}$ 
has the finite intersection property.
To see this, given $A_1,\ldots,A_n\subseteq\N$,
for every function $\chi:\{1,\ldots,n\}\to\{+,-\}$
we define $C_\chi:=\bigcap_{i=1}^n (A_i)^{\chi(i)}$,
where we agree that $A^+=A$ and $A^-=A^c$ is the complement. 
Then we consider the finite coloring $\N=\bigcup_{\chi}C_\chi$.
By the hypothesis, there exists a monochromatic $\ell$-term arithmetic progression
$a, a+d,\ldots,a+(\ell-1)d\in C_\chi$ for a suitable $\chi$.
By the definitions, it is readily seen that the pair $(a,d)\in X(A_i)$ for every $i=1,\ldots,n$.

Now let us take an ultrafilter $\W$ on $\N\times\N$ that extends $\mathcal{G}$.
If, by contradiction, $T_0(\W)\ne T_j(\W)$ for some $1\le j\le \ell-1$, then 
we could choose a set $A\in T_0(\W)$ such that its complement $A^c\in T_j(\W)$.
In this case, the set $Y:=T_0^{-1}(A)\cap T_j^{-1}(A^c)$ would belong to $\W$.
Finally, we observe that $Y\cap X_A=\emptyset$, and we obtain 
the desired contradiction because $X_A\in\W$.
\end{proof}

\section{The ultrafilter proof}

Before presenting the proof, let us introduce a convenient notation for 
iterated tensor products and pseudo-sums. For $n\in\N$ and $\U\in\beta\N$, let:
$$\U^{n\otimes}=\underbrace{\U\otimes\ldots\otimes\U}_{n\ \text{times}}\ ;\quad
\U^{n\oplus}=\underbrace{\U\oplus\ldots\oplus\U}_{n\ \text{times}}.$$
We agree that $\U^{1\otimes}=\U^{1\oplus}=\U$.

\begin{theorem}[Van der Waerden]
For every $\ell\in\N$ and for every finite coloring $\N=C_1\cup\ldots\cup C_r$
there exists a monochromatic $\ell$-term arithmetic progression $a, a+d,\ldots,a+(\ell-1)d\in C_i$.
\end{theorem}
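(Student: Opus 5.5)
We would argue by induction on $\ell$, using Lemma~\ref{VdW-ultra} in both directions. The cases $\ell\le 2$ are trivial. Assume the statement holds for $\ell$. By $(1)\Rightarrow(2)$ of Lemma~\ref{VdW-ultra} we fix an ultrafilter $\W$ on $\N\times\N$ with $T_0(\W)=\ldots=T_{\ell-1}(\W)=:\U$, and we put $\V:=T_\ell(\W)$. The goal is to produce an ultrafilter $\W'$ on $\N\times\N$ with $T_0(\W')=\ldots=T_\ell(\W')$ and then apply $(2)\Rightarrow(1)$.

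The basic tool is to push tensor powers $\W^{k\otimes}$, viewed as ultrafilters on $(\N\times\N)^k$, forward by linear maps
$$\Phi\colon\bigl((n_1,m_1),\ldots,(n_k,m_k)\bigr)\mapsto\Bigl(\sum_s \alpha_s(n_s,m_s),\ \sum_s \beta_s m_s\Bigr),$$
where each $\alpha_s$ is a nonnegative integer combination of $n_s$ and $m_s$. By Lemma~\ref{tensorproducts}, together with $\text{Sum}(\U_1\otimes\ldots\otimes\U_k)=\U_1\oplus\ldots\oplus\U_k$, each $T_j(\Phi(\W^{k\otimes}))$ is a pseudo-sum $\mathcal{X}_1\oplus\ldots\oplus\mathcal{X}_k$. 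Here every $\mathcal{X}_s$ is an image of $\W$ under some $T_i$, and which $i$ occurs is read off from the coefficients. For example, the map $((n,m),(n',m'))\mapsto(n'+n+\ell m,\ m'-m)$ would give $T_j$-image $T_j(\W)\oplus T_{\ell-j}(\W)$ in the appropriate order. This mimics the classical colour-focusing step, in which a new progression is shifted to end at the focus $a+\ell d$ of an old one.

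The plan is to build, for each $k$, an ultrafilter $\W_k=\Phi_k(\W^{k\otimes})$ whose $T_j$-images for $j\le\ell$ are the words
$$\V^{i\oplus}\oplus\U^{(k-i)\oplus}\qquad(i=0,\ldots,k).$$
In $\W_k$, the images $T_0,\ldots,T_{\ell-1}$ should agree with one another, and $T_\ell$ should be the word with one more $\V$ in front. In other words, $\W_k$ should be a chain of $k$ "focused" $\ell$-progressions.

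The main obstacle is closing the argument, because $\beta\N$ has no pigeonhole principle and there is no a priori reason why two such words should coincide. To handle this, we would not aim at full equality of ultrafilters. Instead we would prove property (1) of Lemma~\ref{VdW-ultra} for $\ell+1$ directly. Given an $r$-colouring, take $k=r+1$. Each word $\V^{i\oplus}\oplus\U^{(k-i)\oplus}$ contains exactly one colour class. Among the $k+1$ words some two share a colour, say those with indices $i<i'$. We then pick an element of $\W_k$ lying in finitely many preimages $T_j^{-1}(C)$, and read off an $(\ell+1)$-term monochromatic progression as the difference of the two focused progressions indexed by $i$ and $i'$.

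Getting the linear maps $\Phi_k$ right, so that the coefficients stay nonnegative and the index bookkeeping produces exactly this telescoping, is the step where we expect the real work. If negative differences cause trouble, we would first try replacing $\N$ by $\mathbb{Z}$ in the second coordinate and then use translation invariance to return to $\N$.
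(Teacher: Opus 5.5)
\textbf{There is a genuine gap: the step that produces the $(\ell+1)$-term progression is missing, and the mechanism you sketch for it would not work.} Your objects are the right ones. With your naming ($\U=T_0(\W)=\ldots=T_{\ell-1}(\W)$, $\V=T_\ell(\W)$), the words $\V^{i\oplus}\oplus\U^{(k-i)\oplus}$ are exactly the paper's $\Zc_s$ (the paper uses the letters the other way round). The pigeonhole over colours is also the paper's step.

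The problem is the last step. A single ultrafilter on $\N\times\N$ whose images $T_0,\ldots,T_{\ell-1}$ coincide has at most two distinct images among $T_0,\ldots,T_\ell$. So a $\W_k$ fixed before the colouring is known cannot realise all $k+1$ words. Under your own reading (the $T_\ell$-image is ``the word with one more $\V$ in front''), $\W_k$ only links consecutive words $i,i+1$. If pigeonhole gives $i'>i+1$, the colour $C$ need not lie in the $T_\ell$-image of such a $\W_k$.

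``Taking the difference of the focused progressions indexed by $i$ and $i'$'' has no ultrafilter counterpart. Elements picked from different ultrafilters are unrelated, whereas colour focusing needs both progressions to come from one coherent choice of parameters. That subtraction is also where your negative coordinates come from. Your sample map's second coordinate $m'-m$ need not lie in $\N$. Its $T_j$-images $T_{\ell-j}(\W)\oplus T_j(\W)$ do not have the required shape either: $T_0$ gives $\V\oplus\U$ but $T_1$ gives $\U\oplus\U$.

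\textbf{The missing idea:} choose the map after the pigeonhole, depending on the pair $i<i'$, and make the new common difference a sum of old differences rather than a difference. With $k=r+1$, put
\[
\Phi_{i,i'}\colon\bigl((n_s,m_s)\bigr)_{s=1}^{k}\longmapsto\Bigl(\sum_{s\le i}(n_s+\ell m_s)+\sum_{s>i}n_s,\ \sum_{i<s\le i'}m_s\Bigr).
\]
Then $T_j\circ\Phi_{i,i'}=\text{Sum}\circ(T_\ell,\ldots,T_\ell,T_j,\ldots,T_j,T_0,\ldots,T_0)$, with blocks of lengths $i$, $i'-i$, $k-i'$. By Lemma~\ref{tensorproducts},
\[
T_j\bigl(\Phi_{i,i'}(\W^{k\otimes})\bigr)=\V^{i\oplus}\oplus T_j(\W)^{(i'-i)\oplus}\oplus\U^{(k-i')\oplus}
\]
(empty blocks omitted). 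This is word $i$ for $j<\ell$ and word $i'$ for $j=\ell$. Hence $\bigcap_{j=0}^{\ell}T_j^{-1}(C)\in\Phi_{i,i'}(\W^{k\otimes})$. Any $(a,d)$ in this set gives $a,a+d,\ldots,a+\ell d\in C$ with $d=\sum_{i<s\le i'}m_s$, and no detour through $\mathbb{Z}$ is needed.

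The paper does exactly this, phrased in terms of sets. In the paper's notation, $\Gamma,\Gamma_1,\Gamma_2$ strip the common prefix $\U^{p\oplus}$ and the common suffix $\V^{(r+2-q)\oplus}$. What remains is a set lying in $T_j(\W)^{(q-p)\oplus}=\psi_j(\W^{(q-p)\otimes})$ for every $j\le\ell$.
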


\begin{proof}
We proceed by induction on the length $\ell$ of the arithmetic progression.
The base cases $\ell=1,2$ are trivial.
At the inductive step, consider an arbitrary finite
coloring $\N=C_1\cup\ldots\cup C_r$.
By the inductive hypothesis, we can pick an ultrafilter $\W$ as given by Lemma \ref{VdW-ultra}.
Let $\U:=T_\ell(\W)$, $\V:=T_0(\W)=T_1(\W)=\ldots=T_{\ell-1}(\W)$, and 
consider the following ultrafilters on $\N$:
\begin{itemize}
\item
$\Zc_s:=\U^{s\oplus}\oplus \V^{(r+2-s)\oplus}$ for $s=1,\ldots,r+1$.
\end{itemize}
By the property of ultrafilters, each of the above $r+1$ ultrafilters
contains a color of the partition and hence, by the pigeonhole principle, 
there must be a color $C=C_i$ that belongs to two of them,
\emph{i.e.}, $C\in\Zc_p\cap\Zc_q$ where $p<q$.
Now let:
\begin{itemize}
\item
$\Gamma:=\{k\in\N\mid C-k\in\V^{(r+2-q)\oplus}\}$.
\end{itemize}
Since $C\in\Zc_p=\U^{p\oplus}\oplus\V^{(q-p)\oplus}\oplus\V^{(r+2-q)\oplus}$, 
we have $\Gamma\in\U^{p\oplus}\oplus\V^{(q-p)\oplus}$; 
and since $C\in\Zc_q=\U^{p\oplus}\oplus\U^{(q-p)\oplus}\oplus\V^{(r+2-q)\oplus}$, we have
$\Gamma\in\U^{p\oplus}\oplus\U^{(q-p)\oplus}$. Then:
\begin{itemize}
\item
$\Gamma_1:=\{k'\in\N\mid \Gamma-k'\in\V^{(q-p)\oplus}\}\in\U^{p\oplus}$, and
\item
$\Gamma_2:=\{k'\in\N\mid \Gamma-k'\in\U^{(q-p)\oplus}\}\in\U^{p\oplus}$.
\end{itemize}
Take any $k'\in\Gamma_1\cap\Gamma_2\in\U^{p\oplus}$. Then
$\Gamma-k'\in \V^{(q-p)\oplus}\cap\U^{(q-p)\oplus}$, and hence
$\Gamma-k'\in\bigcap_{j=0}^\ell T_j(\W)^{(q-p)\oplus}$.
By using Lemma \ref{tensorproducts}, we observe that for every $j=0,\ldots,\ell$ one has
\begin{multline*}
T_j(\W)^{(q-p)\oplus}=\text{Sum}(T_j(\W)^{(q-p)\otimes})=
\\
=\text{Sum}\left((T_j,\ldots,T_j)(\W^{(q-p)\otimes})\right)=\psi_j(\W^{(q-p)\otimes})
\end{multline*}
where $\psi_j:=\text{Sum}\circ(T_j,\ldots,T_j):(\N\times\N)^{q-p}\to\N$ is the function such that
$$\psi_j:((n_1,m_1),\ldots,(n_{q-p},m_{q-p}))\longmapsto\sum_{s=1}^{q-p}(n_s+jm_s).$$
Since $\Gamma-k'\in\bigcap_{j=0}^\ell\psi_j(\W^{(q-p)\otimes})$, we have that
$\bigcap_{j=0}^\ell\psi_j^{-1}(\Gamma-k')\in\W^{(q-p)\otimes}$.
If $((n_1,m_1),\ldots(n_{q-p},m_{q-p}))$ is any element in that intersection,
then we have $k'+\sum_{s=1}^{q-p}(n_s+jm_s)\in\Gamma$ for every $j=0,\ldots\ell$,
and so we can pick 
$$k\in\bigcap_{j=0}^\ell C-(k'+\sum_{s=1}^{q-p}(n_s+jm_s))\in\V^{(r+2-q)\oplus}.$$
It follows that
$k+k'+\sum_{s=1}^{q-p}(n_s+jm_s)\in C$ for $j=0,\ldots,\ell$.
By letting $a:=k+k'+\sum_{s=1}^{q-p}n_s$ and $d:=\sum_{s=1}^{q-p}m_s$,
we see that $a, a+d, \ldots, a+\ell d\in C$ is
the desired monochromatic $(\ell+1)$-term arithmetic progression. 
\end{proof}

\begin{remark}
The above proof was first obtained by using iterated nonstandard extensions 
(see \cite{dj} for the foundations and some applications of that nonstandard technique). 
As its reformulation in terms of ultrafilters turned out to be 
rather simple and seems to have some original aspects, 
we decided to present it here in that language.
\end{remark}

\bibliographystyle{amsalpha}

\end{document}